%%%%%%%%%%%%%%%%%%%%%%%%%%%%%%%%%%%%%%%%%%%%%%%%%%%%%%%%%%%%%%%%%%%%%%%%%%%%%%
% Authors: Akiyoshi Sannai and Anurag K. Singh
%
%!TEX TS-program = latex
%
% Last edit: Anurag, April 1, 2011
%%%%%%%%%%%%%%%%%%%%%%%%%%%%%%%%%%%%%%%%%%%%%%%%%%%%%%%%%%%%%%%%%%%%%%%%%%%%%%

\documentclass[11pt]{amsart}
\usepackage{amssymb,amscd,amsmath,enumerate}

\newtheorem{theorem}{Theorem}[section]
\newtheorem{lemma}[theorem]{Lemma}
\newtheorem{corollary}[theorem]{Corollary}
\newtheorem{proposition}[theorem]{Proposition}

\theoremstyle{definition}
\newtheorem{example}[theorem]{Example}
\newtheorem{remark}[theorem]{Remark}
\numberwithin{equation}{theorem}

\def\nil{\mathrm{nil}}
\def\GR{{+\mathrm{GR}}}
\def\sep{{+\mathrm{sep}}}

\def\ge{\geqslant}

\def\bar{\overline}
\def\del{\partial}

\def\to{\longrightarrow}
\def\ff{\operatorname{frac}}
\def\trace{\operatorname{tr}}
\def\Ass{\operatorname{Ass}}
\def\Ext{\operatorname{Ext}}

\def\GL{\operatorname{GL}}
\def\PSL{\operatorname{PSL}}
\def\Gal{\operatorname{Gal}}

\def\bsx{{\boldsymbol{x}}}

\def\fraka{\mathfrak{a}}
\def\frakp{\mathfrak{p}}
\def\frakq{\mathfrak{q}}
\def\frakm{\mathfrak{m}}
\def\frakM{\mathfrak{M}}

\def\FF{\mathbb{F}}
\def\NN{\mathbb{N}}
\def\QQ{\mathbb{Q}}
\def\ZZ{\mathbb{Z}}

\begin{document}
\title[Maps on local cohomology]{Galois extensions, plus closure, and maps on local cohomology}

\author{Akiyoshi Sannai}
\email{sannai@ms.u-tokyo.ac.jp}
\address{Graduate School of Mathematical Sciences, The University of Tokyo, 3-8-1 Komaba, Meguro, Tokyo, 153-8914 Japan}

\author{Anurag K. Singh}
\email{singh@math.utah.edu}
\address{Department of Mathematics, University of Utah, 155 S. 1400 E., Salt Lake City, UT~84112, USA}

\subjclass[2010]{Primary 13D45; Secondary 13A35, 14B15, 14F17.}
\keywords{Characteristic p methods, local cohomology, big Cohen-Macaulay algebras, integral ring extensions, Galois extensions}
\thanks{The first author was supported by the Japan Society for Promotion of Science (JSPS), and the second author by NSF grant DMS~0856044.}

\begin{abstract}
Given a local domain $(R,\frakm)$ of prime characteristic that is a homomorphic image of a Gorenstein ring, Huneke and Lyubeznik proved that there exists a module-finite extension domain $S$ such that the induced map on local cohomology modules $H^i_\frakm(R)\to H^i_\frakm(S)$ is zero for each $i<\dim R$. We prove that the extension $S$ may be chosen to be generically Galois, and analyze the Galois groups that arise.
\end{abstract}
\maketitle

%%%%%%%%%%%%%%%%%%%%%%%%%%%%%%%%%%%%%%%%%%%%%%%%%%%%%%%%%%%%%%%%%%%%%%%%
\section{Introduction}
%%%%%%%%%%%%%%%%%%%%%%%%%%%%%%%%%%%%%%%%%%%%%%%%%%%%%%%%%%%%%%%%%%%%%%%%

Let $R$ be a commutative Noetherian integral domain. We use $R^+$ to denote the integral closure of $R$ in an algebraic closure of its fraction field. Hochster and Huneke proved the following:

\begin{theorem}\cite[Theorem~1.1]{HHbig}
\label{theorem:HH}
If $R$ is an excellent local domain of prime characteristic, then each system of parameters for $R$ is a regular sequence on $R^+$, i.e., $R^+$ is a balanced big Cohen-Macaulay algebra for $R$.
\end{theorem}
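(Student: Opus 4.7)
The plan is to show the equivalent statement that every class in $H^i_\frakm(R)$ is killed in $H^i_\frakm(S)$ for some module-finite extension domain $S$ of $R$, whenever $i < \dim R$; taking a direct limit over such $S$ inside $R^+$ then gives $H^i_\frakm(R^+) = 0$ for $i < \dim R$, which is equivalent to the claim that any system of parameters of $R$ is a regular sequence on $R^+$. First I would reduce to the complete local case by faithful flatness of $R \to \widehat R$, and use Cohen's structure theorem to present $R$ as module-finite over a regular local ring $A = k[[x_1,\ldots,x_d]]$, the $x_i$ being simultaneously a system of parameters for $R$.

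Via the \v Cech complex, and after clearing a common denominator, the statement reduces to the following: whenever $r x_{k+1} = \sum_{i=1}^k r_i x_i$ in some module-finite extension $S_0$ of $R$, one has $r \in (x_1,\ldots,x_k) R^+$. Applying the Frobenius iteratively yields $r^q x_{k+1}^q = \sum r_i^q x_i^q$ for every $q = p^e$. The colon-capturing property of tight closure in the complete local domain $S_0$ then produces a nonzero $c \in S_0$ with $c r^q \in (x_1^q,\ldots,x_k^q) S_0$ for all $q$; equivalently, $r$ lies in the tight closure of $(x_1,\ldots,x_k)$ in $S_0$.

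The main obstacle is the passage from tight closure to plus closure: that is, showing
$$
r \in (x_1,\ldots,x_k)^* \implies r \in (x_1,\ldots,x_k) R^+.
$$
The proof relies crucially on the fact that $R^+$ is closed under taking $p$-th roots of its elements; this allows one, starting from the Frobenius-power relations $c r^q \in (x_1^q,\ldots,x_k^q) S_0$, to build explicit module-finite extensions of $S_0$ by adjoining solutions to carefully chosen integral equations derived from these relations, and to verify that in a suitable such extension $r$ indeed lies in $(x_1,\ldots,x_k) S$. The delicate aspect is that the extensions must remain module-finite and must be compatible across the parameters of interest, so that a single finite $S$ witnesses the desired membership. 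The existence of such extensions—equivalently, the equality of tight closure and plus closure for parameter ideals in excellent local domains of prime characteristic—is the heart of the Hochster--Huneke argument, and its proof constitutes the bulk of the technical work.
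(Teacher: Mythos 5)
The paper merely cites \cite[Theorem~1.1]{HHbig} and offers no proof of its own, so there is no in-paper argument to compare against; what follows is an assessment of your sketch against the actual Hochster--Huneke argument.

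Your outline captures the architecture correctly: complete $R$, realize it as module-finite over a power series ring $A=k[[x_1,\dots,x_d]]$ via Cohen's structure theorem, reduce to killing a \v Cech cycle, and use Frobenius powers together with a uniform multiplier (colon-capturing over $A$) to place $r$ in the tight closure of $(x_1,\dots,x_k)$. Two caveats, one minor and one serious. The minor one: to conclude $H^i_\frakm(R^+)=0$ you must kill classes of $H^i_\frakm(S)$ for \emph{every} module-finite $R\subseteq S\subseteq R^+$, not merely for $S=R$; this is fine because each such $S$ is again a complete local domain module-finite over the same $A$, but the point should be stated. The serious one: the step you describe as ``building explicit module-finite extensions by adjoining solutions to carefully chosen integral equations'' is a pointer, not a proof. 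Passing from $cr^q=\sum_i s_{i,q}x_i^q$ (with the $s_{i,q}$ varying with $q$) to a single module-finite $S$ and elements $t_i\in S$ with $r=\sum_i t_ix_i$ is precisely the content of Hochster and Huneke's equational lemma, and it constitutes the entire technical substance of \cite{HHbig}; nothing in your sketch performs it. Your framing via ``tight closure equals plus closure for parameter ideals'' is, after the fact, logically equivalent to what must be shown, but it supplies no construction, and Smith's theorem asserting that equality is itself ordinarily derived \emph{from} the big Cohen--Macaulay theorem rather than used to prove it. So the sketch is a correct road map, but the destination --- the equational construction --- is exactly the part left unexplored.
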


It follows that for a ring $R$ as above, and $i<\dim R$, the local cohomology module $H^i_\frakm(R^+)$ is zero. Hence, given an element $[\eta]$ of $H^i_\frakm(R)$, there exists a module-finite extension domain $S$ such that $[\eta]$ maps to $0$ under the induced map $H^i_\frakm(R)\to H^i_\frakm(S)$. This was strengthened by Huneke and Lyubeznik, albeit under mildly different hypotheses:

\begin{theorem}\cite[Theorem~2.1]{HL}
\label{thm:HL}
Let $(R,\frakm)$ be a local domain of prime characteristic that is a homomorphic image of a Gorenstein ring. Then there exists a module-finite extension domain~$S$ such that the induced map
\[
H^i_\frakm(R)\to H^i_\frakm(S)
\]
is zero for each $i<\dim R$.
\end{theorem}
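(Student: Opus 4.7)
The plan is to combine Theorem~\ref{theorem:HH} with local duality over a Gorenstein ring: the latter replaces the typically non-finitely-generated module $H^i_\frakm(R)$ by a finitely generated Ext module, from which one aims to extract a single module-finite extension that kills ``everything at once.''

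\emph{Step 1 (Completion and duality setup).} First reduce to $R$ complete. Completion is faithful at the level of local cohomology, and a module-finite extension of $\widehat R$ can be descended to one of $R$ by standard techniques. By Cohen's structure theorem, $\widehat R = A/I$ for a complete regular local ring $(A,\frakn)$ of some dimension $n$; in particular $A$ is Gorenstein. For any module-finite extension $R \subseteq S$ inside $R^+$, local duality over $A$ yields
\[
H^i_\frakm(R)^\vee \cong \Ext^{n-i}_A(R,A) =: M, \qquad H^i_\frakm(S)^\vee \cong \Ext^{n-i}_A(S,A) =: M_S,
\]
where $(-)^\vee=\Hom_R(-,E_R)$ is the Matlis dual over $R$. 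Functoriality identifies the pullback $H^i_\frakm(R) \to H^i_\frakm(S)$ with the Matlis dual of the natural map $M_S \to M$, so the former vanishes iff the image $K_S := \operatorname{Im}(M_S \to M)$ vanishes. The key gain is that $M$ is a \emph{finitely generated} $R$-module, even though $H^i_\frakm(R)$ typically is not.

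\emph{Step 2 (Intersection of images is zero).} Theorem~\ref{theorem:HH} gives $H^i_\frakm(R^+) = 0$ for $i < \dim R$, hence $\dlim_S H^i_\frakm(S) = 0$, where $S$ runs over module-finite subextensions of $R^+$. Given $\alpha \in \bigcap_S K_S$, view $\alpha$ as a homomorphism $H^i_\frakm(R) \to E_R$; for any $\xi \in H^i_\frakm(R)$ choose $S_\xi$ in which $\xi$ dies, and observe that $\alpha$ factors through $H^i_\frakm(R) \to H^i_\frakm(S_\xi)$, so $\alpha(\xi) = 0$. Hence $\alpha = 0$ and $\bigcap_S K_S = 0$. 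Since there are only finitely many values $i < \dim R$, it suffices to fix $i$ and produce an $S_i$ that kills the $i$-th map; a final compositum across all such $i$ handles all the maps simultaneously.

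\emph{Step 3 (Main obstacle: extracting a single $S$).} The delicate point is to upgrade $\bigcap_S K_S = 0$ to $K_{S_0} = 0$ for a single $S_0$, since the Noetherian module $M$ need not be Artinian and hence need not satisfy DCC. The natural attempt---pick a minimal generating set of some $K_{S_0}$, use Step 2 to find, for each generator, a module-finite extension excluding it from the image, and pass to the compositum---produces a strictly descending chain $K_{S_0} \supsetneq K_{S_1} \supsetneq \cdots$, but termination is not automatic. I expect termination to be forced by the semilinear Frobenius on $M$ dual to the natural Frobenius action on $H^i_\frakm(R)$: finite generation of $M$ together with Frobenius compatibility should constrain the descending family $\{K_S\}$ sharply enough to force the chain to stop. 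This is the crux of the argument and the essential new input of Huneke--Lyubeznik beyond Theorem~\ref{theorem:HH}, and it is precisely where the assumption that $R$ is a homomorphic image of a Gorenstein ring---not merely excellent---is used, since that assumption is what makes $M$ a finitely generated $R$-module and thus allows a reduction to finite data.
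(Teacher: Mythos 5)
Your Steps 1 and 2 are correct and match the standard setup: completion, local duality over the Gorenstein (regular after completion) ring, identification of the dual of $H^i_\frakm(R)\to H^i_\frakm(S)$ with $\Ext^{n-i}_A(S,A)\to\Ext^{n-i}_A(R,A)$, and the observation that $\bigcap_S K_S=0$ because $H^i_\frakm(R^+)=0$. The problem is Step 3, where the real work lives, and where you correctly flag the difficulty but propose a mechanism that does not exist. Finite generation of $M$ plus a semilinear Frobenius does not force a strictly descending chain of submodules $K_{S_0}\supsetneq K_{S_1}\supsetneq\cdots$ to terminate: finitely generated modules over a Noetherian ring of positive dimension do not satisfy DCC, and the Frobenius compatibility gives no obvious chain condition here. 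So the passage from $\bigcap_S K_S=0$ to $K_{S_0}=0$ for a single $S_0$ is a genuine gap as you have written it.

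The missing idea in Huneke--Lyubeznik (and visible in this paper's proof of Theorem~\ref{thm:main:cohomology}\,(2), which mirrors it) is an \emph{induction on $\dim R$ combined with localization at the non-maximal associated primes of $M$}. Concretely: let $\frakp_1,\dots,\frakp_s$ be $\Ass_A M\smallsetminus\{\frakM\}$. For each $\frakp_j$ one has $\dim R_{\frakp_j}<\dim R$ and, after the standard dimension-shift using catenarity, $i-\dim R/\frakp_j<\dim R_{\frakp_j}$, so the inductive hypothesis produces a module-finite extension of $R_{\frakp_j}$ killing the corresponding localized local cohomology map; Lemma~\ref{lemma:unlocalize} then lifts this to a module-finite extension of $R$. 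Taking a compositum over the finitely many $\frakp_j$ yields a single $R'$ for which (again by local duality, applied over $A_{\frakp_j}$) the map $M'_{\frakp_j}\to M_{\frakp_j}$ is zero for every $j$. This forces the image $K_{R'}\subseteq M$ to be supported only at $\frakM$, hence of \emph{finite length}. Only at this point does the ``one generator at a time'' Frobenius/equational argument enter: with a finite-length image it terminates after finitely many steps. Your plan tries to apply the generator-by-generator argument at the outset, without first cutting $M$ down to finite length, and that cannot be made to close. So the proposal is not a complete proof; the key structural ingredient --- dimensional induction via localization at $\Ass_A M$ --- is absent.
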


By a \emph{generically Galois extension} of a domain $R$, we mean an extension domain $S$ that is integral over $R$, such that the extension of fraction fields is Galois; $\Gal(S/R)$ will denote the Galois group of the corresponding extension of fraction fields. We prove the following:

\begin{theorem}\label{thm:main:cohomology}
Let $R$ be a domain of prime characteristic.
\begin{enumerate}[\,\rm(1)]
\item Let $\fraka$ be an ideal of $R$ and $[\eta]$ an element of ${H^i_\fraka(R)}_\nil$ (see~Section~\ref{subsec:nilpotent}). Then there exists a module-finite generically Galois extension $S$, with $\Gal(S/R)$ a solvable group, such that $[\eta]$ maps to $0$ under the induced map $H^i_\fraka(R)\to H^i_\fraka(S)$.

\item Suppose $(R,\frakm)$ is a homomorphic image of a Gorenstein ring. Then there exists a module-finite generically Galois extension $S$ such that the induced map $H^i_\frakm(R)\to H^i_\frakm(S)$ is zero for each $i<\dim R$.
\end{enumerate}
\end{theorem}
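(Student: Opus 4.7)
The plan is to deduce part~(2) from Theorem~\ref{thm:HL} by a Galois-closure argument, and to prove part~(1) by induction on the Frobenius nilpotency index of $[\eta]$. The heart of the proof lies in the base case of part~(1), where one constructs a module-finite generically Galois extension killing a cohomology class already annihilated by a single application of Frobenius.

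For part~(2), apply Theorem~\ref{thm:HL} to obtain a module-finite extension domain $S_0$ for which $H^i_\frakm(R)\to H^i_\frakm(S_0)$ is zero for every $i<\dim R$. Fix an algebraic closure of $\ff(R)$ containing $\ff(S_0)$, let $L$ be the normal closure of $\ff(S_0)$ over $\ff(R)$, and write $\ff(S_0)=\ff(R)(\alpha_1,\dots,\alpha_k)$ with each $\alpha_j$ integral over $R$. Take $S$ to be the $R$-subalgebra of $L$ generated by all $\ff(R)$-conjugates of the $\alpha_j$: each such conjugate is integral over $R$, so $S$ is module-finite over $R$; its fraction field equals $L$, Galois over $\ff(R)$; and $S\supseteq S_0$, so $H^i_\frakm(R)\to H^i_\frakm(S)$ factors through $H^i_\frakm(R)\to H^i_\frakm(S_0)$ and vanishes.

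For part~(1), induct on the smallest $e\ge 0$ with $F^e([\eta])=0$, where $F$ is the Frobenius action on $H^i_\fraka(R)$. The case $e=0$ is trivial. For $e\ge 1$, set $[\eta']:=F^{e-1}([\eta])$, so that $F([\eta'])=0$, and apply the base case below to obtain a module-finite generically Galois extension $S_1/R$ with solvable Galois group killing $[\eta']$. Since Frobenius commutes with the map $H^i_\fraka(R)\to H^i_\fraka(S_1)$, the image of $[\eta]$ there has Frobenius nilpotency index at most $e-1$, so by induction there is a module-finite generically Galois extension $S_2/S_1$ with solvable Galois group killing it. Let $S$ be the $R$-subalgebra of the normal closure of $\ff(S_2)$ over $\ff(R)$ generated by all $\ff(R)$-conjugates of a finite generating set of $S_2/R$; then $S/R$ is module-finite and generically Galois, and $\Gal(S/R)$ sits in a short exact sequence with kernel $\Gal(S/S_1)$, embedded in a power of $\Gal(S_2/S_1)$, and quotient $\Gal(S_1/R)$, both solvable, so $\Gal(S/R)$ is solvable.

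The main obstacle is the base case: given $[\eta]$ with $F([\eta])=0$, produce a single generically Galois extension killing $[\eta]$. Choose generators $\bsx=x_1,\dots,x_n$ of $\sqrt{\fraka}$, a \v{C}ech cocycle representative $\eta$ in degree $i$, and a cochain $\zeta$ in degree $i-1$ with $d\zeta=\eta^{[p]}$; after multiplying $\zeta$ by a suitable power of $\prod_j x_j$, one may arrange each component $\zeta_J = z_J/\prod_{j\in J}x_j^{pN}$ with $z_J\in R$. Adjoin new elements $w_J$ to $R$ satisfying $w_J^p = z_J$, and set $\xi_J := w_J/\prod_{j\in J}x_j^N$ in the localization of the extended algebra. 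A direct computation with the \v{C}ech differential, combined with Frobenius injectivity ($a^p=b^p\Rightarrow a=b$ in a reduced characteristic-$p$ ring), yields $(d\xi)^{[p]} = d\zeta = \eta^{[p]}$ and hence $d\xi=\eta$, so $[\eta]$ vanishes in the extension. One then verifies that the resulting $R$-algebra is a module-finite domain, generically Galois over $R$ with solvable Galois group.
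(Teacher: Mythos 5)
Your proposal contains two genuine gaps, one in each part, and in both cases the issue is separability.

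\textbf{Part (2).} You apply Theorem~\ref{thm:HL} to get $S_0$ and then pass to the normal closure $L$ of $\ff(S_0)/\ff(R)$, asserting $L$ is Galois over $\ff(R)$. This is false in general: Theorem~\ref{thm:HL} gives no control over separability, so $\ff(S_0)/\ff(R)$ can be purely inseparable (indeed $R^\infty$ by itself can already kill local cohomology in favorable cases), and the normal closure of an inseparable extension is normal but \emph{not} Galois. There is no Galois closure of an inseparable extension. The paper has to work much harder here: it runs an induction on dimension, uses local duality over a Gorenstein base to show that after passing to a suitable module-finite generically \emph{separable} extension $R'$ the image of $H^i_\frakm(R)\to H^i_\frakm(R')$ has finite length, then applies Theorem~\ref{thm:HL} to each generator and takes the \emph{separable closure} $R_2$ of $R'$ inside the resulting extension. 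Since the remaining extension is purely inseparable, the image in $H^i_\frakm(R_2)$ lands in the Frobenius-nilpotent submodule, and part (1) finishes. So part (2) genuinely depends on part (1); your version bypasses part (1) and fails.

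\textbf{Part (1), base case.} You adjoin $w_J$ with $w_J^p=z_J$ and claim the resulting $R$-algebra is generically Galois with solvable Galois group. Adjoining $p$-th roots of elements of $R$ is a purely inseparable extension of $\ff(R)$; it is not Galois at all unless the $z_J$ already had $p$-th roots in $\ff(R)$. The sentence ``One then verifies that the resulting $R$-algebra is \dots generically Galois over $R$ with solvable Galois group'' cannot be carried out. The paper's central idea is precisely how to avoid inseparability here: instead of $T^p-z_J$, adjoin roots $t_i$ of $T^p+x_0^qT-b_i$, whose derivative $x_0^q$ is nonzero, making them separable; Lemma~\ref{lemma:solvable} shows their Galois groups are solvable via a change of variable to Artin-Schreier form. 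The identity $t_i^p=b_i-x_0^qt_i$ then lets one absorb the $x_0$-denominators of $\alpha$ into $F(\beta)+\gamma$, where $\gamma$ lies in the subcomplex $C^{i-1}(1,x_1,\dots,x_n;S)$, i.e.\ involves one fewer variable. Iterating (together with correcting by boundaries from $C^{i-2}$) eventually leaves a $\gamma$ whose components are forced, by the equation $\del(\gamma)=F(\eta)$, to already be $p$-th powers \emph{in $\ff(S)$}; adjoining those $p$-th roots does not change $\ff(S)$ and hence does not damage separability. The induction step in your writeup is fine in outline, but the base case, which is where all the content lies, has to be redone along these lines.

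\end{document}
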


Set $R^\sep$ to be the $R$-algebra generated by the elements of $R^+$ that are separable over $\ff(R)$. Under the hypotheses of Theorem~\ref{thm:main:cohomology}\,(2), $R^\sep$ is a separable balanced big Cohen-Macaulay $R$-algebra; see Corollary~\ref{corollary:sepCM}. In contrast, the algebra $R^\infty$, i.e., the purely inseparable part of $R^+$, is not a Cohen-Macaulay $R$-algebra in general: take~$R$ to be an $F$-pure domain that is not Cohen-Macaulay; see \cite[page~77]{HHbig}.

For an $\NN$-graded domain $R$ of prime characteristic, Hochster and Huneke proved the existence of a $\QQ$-graded Cohen-Macaulay $R$-algebra $R^\GR$, see Theorem~\ref{thm:hh:graded}. In view of this and the preceding paragraph, it is natural to ask whether there exists a $\QQ$-graded separable Cohen-Macaulay $R$-algebra; in Example~\ref{example:grsep} we show that the answer is negative. 

In Example~\ref{example:nfgcm} we construct an $\NN$-graded domain of prime characteristic for which no module-finite $\QQ$-graded extension domain is Cohen-Macaulay. 

We also prove the following results for closure operations; the relevant definitions may be found in Section~\ref{subsec:closure}.

\begin{theorem}
\label{thm:main:ideal}
Let $R$ be an integral domain of prime characteristic, and let $\fraka$ be an ideal of $R$.
\begin{enumerate}[\,\rm(1)]
\item Given $z\in\fraka^F$, there exists a module-finite generically Galois extension $S$, with $\Gal(S/R)$ a solvable group, such that $z\in\fraka S$.

\item Given $z\in\fraka^+$, there exists a module-finite generically Galois extension $S$ such that $z\in\fraka S$.
\end{enumerate}
\end{theorem}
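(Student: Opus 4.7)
The plan is to prove part~(1) by an explicit Artin-Schreier construction in the base case together with induction on the Frobenius level, and then to deduce part~(2) by transferring purely inseparable witnesses to the separable setting via $p^m$-th Frobenius powers and invoking~(1). Write $\fraka = (a_1, \ldots, a_n)$ and $z^{p^e} = \sum r_i a_i^{p^e}$. For the base case $e = 1$ and $n = 2$ with $\fraka = (a, b)$ and $z^p = ra^p + sb^p$, adjoin $\tau$ satisfying the separable polynomial $T^p - Tb^p - r = 0$ (derivative $-b^p \neq 0$), and define $s_2 := (z - \tau a)/b$. The identity
\[
(z - \tau a)^p \;=\; z^p - \tau^p a^p \;=\; (ra^p + sb^p) - (r + \tau b^p)a^p \;=\; b^p(s - \tau a^p)
\]
shows $s_2$ is integral over $R[\tau]$, and eliminating $\tau$ yields the separable relation $s_2^p - ba^{p-1} s_2 + za^{p-1} - s = 0$ (derivative $-ba^{p-1} \neq 0$) for $s_2$ over $R$. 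Hence $z = a\tau + bs_2 \in \fraka \cdot R[\tau, s_2]$, and adjoining $(p-1)$-th roots of $b^p$ and $ba^{p-1}$ to ensure normality gives a module-finite generically Galois extension whose Galois group is an iterated extension of Kummer and Artin-Schreier layers, hence solvable.

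The case $n > 2$ with $e = 1$ is handled by iterating: adjoining $\tau_1$ with $\tau_1^p - \tau_1 a_2^p = r_1$ converts $(z - \tau_1 a_1)^p$ into an analogous Frobenius-closure relation in $(a_2, \ldots, a_n)$ with one fewer generator, and $n - 1$ such iterations complete the base case. For $e > 1$, I induct on $e$: since $(z^p)^{p^{e-1}} = \sum r_i (a_i^p)^{p^{e-1}}$ places $z^p$ in the level-$(e-1)$ Frobenius closure of $\fraka^{[p]}$, the inductive hypothesis yields a module-finite generically Galois solvable extension $S_1$ of $R$ with $z^p \in \fraka^{[p]} S_1$; applying the base case over $S_1$ and Galois-closing over $\ff(R)$ gives the required $S$ (with solvable Galois group, since extensions of solvable groups are solvable). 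For part~(2), given $z \in \fraka R^+$ write $z = \sum r_i a_i$ with $r_i \in R^+$; since $R^+$ is purely inseparable over $R^\sep$, there is a common $m$ with $r_i^{p^m} \in R^\sep$, so $z^{p^m} \in \fraka^{[p^m]} R^\sep$, and thus $z^{p^m} \in \fraka^{[p^m]} S_1$ for a module-finite generically Galois subextension $S_1 \subseteq R^\sep$ of $R$; that is, $z$ lies in the Frobenius closure $(\fraka S_1)^F$ computed in $S_1$. Applying~(1) over $S_1$ yields $S_2$ with $z \in \fraka S_2$, and Galois-closing $\ff(S_2)$ over $\ff(R)$ (the tower $R \subseteq S_1 \subseteq S_2$ is separable at the fraction-field level) produces the required module-finite generically Galois extension $S$ of $R$.

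The main obstacle is the base case $e = 1$, $n = 2$ in part~(1): the particular choice of Artin-Schreier equation $\tau^p - \tau b^p = r$ (rather than, say, the standard $\tau^p - \tau = r$) is precisely what makes $s_2 = (z - \tau a)/b$ simultaneously integral and separable over $R$, and verifying this rests on the displayed identity together with the elimination computation yielding $s_2$'s separable minimal polynomial. The remaining steps---the iteration on $n$, the induction on $e$, and the preservation of solvability under Galois closure---are then routine.
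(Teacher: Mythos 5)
Your argument is correct and rests on the same key construction as the paper's: replacing the Frobenius-closure relation $z^p=\sum b_ia_i^p$ by an honest ideal membership $z=\sum t_ia_i$, where the auxiliary $t_i$ are roots of polynomials of the form $T^p+(\text{$p$-th power})\,T-b_i$, which are solvable over $\ff(R)$ by exactly the substitution in the paper's Lemma~\ref{lemma:solvable}, and the remaining coefficient is solved for and then verified integral via the resulting monic $p$-th power identity. The differences are purely organizational: you iterate on the number of generators, peeling them off one at a time with the coefficient $a_{i+1}^p$ in each Artin--Schreier step, whereas the paper adjoins $t_1,\dots,t_m$ simultaneously using the single uniform coefficient $a_0^p$ and sets $t_0=\tfrac{1}{a_0}(z-\sum t_ia_i)$, giving $t_0^p=b_0+\sum t_ia_i^p$ in one stroke; your induction on $e$ peels off the innermost factor of $p$ (so the base case is applied last) while the paper peels off the outermost (base case first); and for part~(2) you realize the separable witness as a module-finite subring of $R^\sep$, whereas the paper takes the separable part of the fraction field of a single module-finite plus-closure witness $T$ --- both then invoke~(1) over that separable intermediate ring and Galois-close, and both treatments handle the non-excellent case by passing to a suitable module-finite subring in the end.
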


In Example~\ref{example:plus} we present a domain $R$ of prime characteristic where $z\in\fraka^+$ for an element~$z$ and ideal $\fraka$, and conjecture that $z\notin\fraka S$ for each module-finite generically Galois extension $S$ with $\Gal(S/R)$ a solvable group. Similarly, in Example~\ref{example:lc} we present a $3$-dimensional ring $R$ where we conjecture that $H^2_\frakm(R)\to H^2_\frakm(S)$ is nonzero for each module-finite generically Galois extension $S$ with $\Gal(S/R)$ a solvable group.

\begin{remark}
The assertion of Theorem~\ref{thm:HL} does not hold for rings of characteristic zero: Let $(R,\frakm)$ be a normal domain of characteristic zero, and $S$ a module-finite extension domain. Then the field trace map $\trace\colon\ff(S)\to\ff(R)$ provides an $R$-linear splitting of $R\subseteq S$, namely
\[
\frac{1}{[\ff(S):\ff(R)]}\trace\colon S\to R\,.
\]
It follows that the induced maps on local cohomology $H^i_\frakm(R)\to H^i_\frakm(S)$ are $R$-split. A variation is explored in \cite{RSS}, where the authors investigate whether the image of $H^i_\frakm(R)$ in $H^i_\frakm(R^+)$ is killed by elements of $R^+$ having arbitrarily small positive valuation. This is motivated by Heitmann's proof of the direct summand conjecture for rings $(R,\frakm)$ of dimension $3$ and mixed characteristic $p>0$, \cite{Heitmann}, which involves showing that the image of
\[
H^2_\frakm(R)\to H^2_\frakm(R^+)
\]
is killed by $p^{1/n}$ for each positive integer $n$.
\end{remark}

Throughout this paper, a \emph{local ring} refers to a commutative Noetherian ring with a unique maximal ideal. Standard notions from commutative algebra that are used here may be found in \cite{BH}; for more on local cohomology, consult \cite{ILL}. For the original proof of the existence of big Cohen-Macaulay modules for equicharacteristic local rings, see \cite{Hochster}.

%%%%%%%%%%%%%%%%%%%%%%%%%%%%%%%%%%%%%%%%%%%%%%%%%%%%%%%%%%%%%%%%%%%%%%%%
\section{Preliminary Remarks}
%%%%%%%%%%%%%%%%%%%%%%%%%%%%%%%%%%%%%%%%%%%%%%%%%%%%%%%%%%%%%%%%%%%%%%%%

\subsection{Closure operations}
\label{subsec:closure}
Let $R$ be an integral domain. The \emph{plus closure} of an ideal~$\fraka$ is the ideal $\fraka^+=\fraka R^+\cap R$.

When $R$ is a domain of prime characteristic $p>0$, we set
\[
R^\infty=\bigcup_{e\ge0}R^{1/p^e}\,,
\]
which is a subring of $R^+$. The \emph{Frobenius closure} of an ideal $\fraka$ is the ideal $\fraka^F=\fraka R^\infty\cap R$. Alternatively, set
\[
\fraka^{[p^e]}=\big(a^{p^e}\mid a\in\fraka\big)\,.
\]
Then $\fraka^F=(r\in R\mid r^{p^e}\in\fraka^{[p^e]}\text{ for some }e\in\NN)$.

\subsection{Solvable extensions}
\label{subsec:solvable}
A finite separable field extension $L/K$ is \emph{solvable} if $\Gal(M/K)$ is a solvable group for some Galois extension $M$ of~$K$ containing~$L$. Solvable extensions form a \emph{distinguished class}, i.e.,
\begin{enumerate}[\,\rm(1)]
\item for finite extensions $K\subseteq L\subseteq M$, the extension $M/K$ is solvable if and only if each of $M/L$ and $L/K$ are solvable;

\item for finite extensions $L/K$ and $M/K$ contained in a common field, if~$L/K$ is solvable, then so is the extension $LM/M$.
\end{enumerate}

A finite separable extension $L/K$ of fields of characteristic $p>0$ is solvable precisely if it is obtained by successively adjoining
\begin{enumerate}[\,\rm(1)]
\item roots of unity;

\item roots of polynomials $T^n-a$ for $n$ coprime to $p$;

\item roots of \emph{Artin-Schreier polynomials}, $T^p-T-a$.
\end{enumerate}

\subsection{Frobenius-nilpotent submodules}
\label{subsec:nilpotent}
Let $R$ be a ring of prime characteristic $p$. A \emph{Frobenius action} on an $R$-module $M$ is an additive map $F\colon M\to M$ with $F(rm)=r^pF(m)$ for each~$r\in R$ and $m\in M$. In this case, $\ker F$ is a submodule of $M$, and we have an ascending sequence
\[
\ker F\subseteq\ker F^2 \subseteq\ker F^3\subseteq\dots\,.
\]
The union of these is the \emph{$F$-nilpotent} submodule of $M$, denoted $M_\nil$. If $R$ is local and $M$ is Artinian, then there exists a positive integer $e$ such that $F^e(M_{\nil})=0$; see \cite[Proposition~4.4]{Ly-F} or \cite[Theorem~1.12]{HS}.

%%%%%%%%%%%%%%%%%%%%%%%%%%%%%%%%%%%%%%%%%%%%%%%%%%%%%%%%%%%%%%%%%%%%%%%%
\section{Proofs}
%%%%%%%%%%%%%%%%%%%%%%%%%%%%%%%%%%%%%%%%%%%%%%%%%%%%%%%%%%%%%%%%%%%%%%%%

We record two elementary results that will be used later:

\begin{lemma}
\label{lemma:solvable}
Let $K$ be a field of characteristic $p>0$. Let $a$ and $b$ elements of $K$ where $a$ is nonzero. Then the Galois group of the polynomial
\[
T^p+aT-b
\]
is a solvable group.
\end{lemma}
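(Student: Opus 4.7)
The plan is to realize the splitting field $L$ of $f(T) = T^p + aT - b$ as a two-step tower $K \subseteq K(c) \subseteq L$, where $K(c)/K$ is obtained by adjoining a root $c$ of $T^{p-1} + a$ (a polynomial of degree $p-1$, coprime to $p$), and $L/K(c)$ is Artin--Schreier. Both of these are among the allowed building blocks in the characterization of solvable extensions recalled in Section~\ref{subsec:solvable}, so this immediately yields that $\Gal(L/K)$ is solvable.

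First I would observe that $f$ is separable, since $f'(T) = a$ is a nonzero constant; in particular $L/K$ is Galois. The auxiliary polynomial $\phi(T) = T^p + aT$ is separable for the same reason, and the identity $\phi(x+y) = \phi(x) + \phi(y)$ in characteristic~$p$ makes its zero set an $\FF_p$-subspace $V$ of $\bar K$. From the factorization $T^p + aT = T(T^{p-1} + a)$, this subspace is one-dimensional over $\FF_p$ and is spanned by any $(p-1)$-th root $c$ of $-a$. If $\alpha$ is any root of $f$, then $\phi(\alpha - \alpha') = f(\alpha) - f(\alpha') = 0$ for every other root $\alpha'$, so the roots of $f$ form the single coset $\alpha + V$, and $L = K(c,\alpha)$.

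The substantive step is identifying $L/K(c)$ as Artin--Schreier. Setting $\beta := \alpha/c$ and using $c^p = c\cdot c^{p-1} = -ac$ together with $\alpha^p = b - a\alpha$, I would compute
\[
\beta^p - \beta \;=\; \frac{b - a\alpha}{-ac} - \frac{\alpha}{c} \;=\; -\frac{b}{ac},
\]
so $\beta$ satisfies the Artin--Schreier equation $T^p - T + b/(ac) = 0$ over $K(c)$, and $L = K(c)(\beta)$. The main (and quite mild) obstacle is spotting this rescaling; once $c$ is identified as a generator of the $\FF_p$-line of roots of $\phi$, the substitution $\alpha \mapsto \alpha/c$ is forced by the requirement that the leading term and the linear term acquire the same coefficient. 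Assembling the two floors then expresses $L/K$ as the composite of the two permitted extension types in Section~\ref{subsec:solvable}, proving solvability.
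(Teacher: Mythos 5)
Your proof is correct and follows essentially the same route as the paper's: adjoin a $(p-1)$-th root $c$ of $-a$ (a degree coprime to $p$, hence a solvable step once $(p-1)$-th roots of unity are also adjoined), then observe that the substitution $\beta = \alpha/c$ converts $T^p + aT - b$ into the Artin--Schreier polynomial $T^p - T + b/(ac)$. Your sign convention $c^{p-1} = -a$ is in fact the correct one to make the rescaling work out (the paper writes $T^{p-1} - a$, which introduces a harmless sign slip, as one then gets $T^p - aT - b$ rather than $T^p + aT - b$); and your remark that the roots of $T^p + aT$ form a one-dimensional $\FF_p$-subspace, so that the roots of $f$ are a single coset $\alpha + V$, is a nice way of motivating why the rescaling by $c$ is forced.
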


\begin{proof}
Form an extension of $K$ by adjoining a primitive $p-1$ root of unity and an element $c$ that is a root of $T^{p-1}-a$. The polynomial $T^p+aT-b$ has the same roots as
\[
\Big(\frac{T}{c}\Big)^p-\Big(\frac{T}{c}\Big)-\frac{b}{{c\,}^p}\,,
\]
which is an Artin-Schreier polynomial in $T/c$.
\end{proof}

\begin{lemma}
\label{lemma:unlocalize}
Let $R$ be a domain, and $\frakp$ a prime ideal. Given a domain $S$ that is a module-finite extension of $R_\frakp$, there exists a domain $T$, module-finite over $R$, with $T_\frakp=S$.
\end{lemma}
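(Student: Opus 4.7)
The plan is to take a finite set of $R_\frakp$-module generators of $S$, rescale them by suitable elements of $R\setminus\frakp$ so that they become integral over $R$, and let $T$ be the $R$-subalgebra of $S$ they generate. The rescaling is invisible after inverting $R\setminus\frakp$, so localizing $T$ at $\frakp$ recovers $S$.

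In more detail, choose generators $s_1,\dots,s_n$ of $S$ as an $R_\frakp$-module. Since $S$ is module-finite over $R_\frakp$, each $s_i$ is integral over $R_\frakp$ and so satisfies an equation
\[
s_i^{d_i}+a_{i,d_i-1}s_i^{d_i-1}+\cdots+a_{i,0}=0,
\qquad a_{i,j}\in R_\frakp.
\]
Write each coefficient $a_{i,j}$ as $r_{i,j}/t_{i,j}$ with $r_{i,j}\in R$ and $t_{i,j}\in R\setminus\frakp$, and let $v_i\in R\setminus\frakp$ be a common multiple of $t_{i,0},\dots,t_{i,d_i-1}$. Multiplying the above equation by $v_i^{d_i}$ and rewriting in the variable $v_is_i$ gives a monic equation for $v_is_i$ whose coefficients are of the form $v_i^{\,j}a_{i,d_i-j}\in R$. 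Hence each $v_is_i$ is integral over $R$.

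Now set
\[
T\;=\;R[v_1s_1,\dots,v_ns_n]\;\subseteq\;S.
\]
Being generated over $R$ by finitely many elements that are integral over $R$, the ring $T$ is module-finite over $R$; being a subring of the domain $S$, it is itself a domain. Since the $v_i$ become units after inverting $R\setminus\frakp$, each $s_i=v_i^{-1}(v_is_i)$ lies in $T_\frakp$, so $T_\frakp\supseteq R_\frakp[s_1,\dots,s_n]=S$; conversely, $T\subseteq S$ and the elements of $R\setminus\frakp$ are already units in $S$, giving $T_\frakp\subseteq S_\frakp=S$. Therefore $T_\frakp=S$.

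There is no serious obstacle here; the only point requiring care is the denominator-clearing substitution $s_i\mapsto v_is_i$, which must be done through the integrality equation (rather than by naively multiplying $s_i$ by an element of $R\setminus\frakp$) in order to keep the resulting polynomial monic over $R$.
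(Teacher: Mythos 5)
Your proof is correct and is essentially the paper's proof, which also takes generators $s_1,\dots,s_n$ of $S$ over $R_\frakp$, scales each by an element of $R\smallsetminus\frakp$ to make it integral over $R$, and sets $T=R[r_1s_1,\dots,r_ns_n]$. You simply spell out the denominator-clearing step and the verification $T_\frakp=S$ that the paper leaves implicit.
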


\begin{proof}
Given $s_i\in S$, there exists $r_i\in R\smallsetminus\frakp$ such that $r_is_i$ is integral over $R$. If $s_1,\dots,s_n$ are generators for $S$ as an $R$-module, set $T=R[r_1s_1,\dots,r_ns_n]$.
\end{proof}

\begin{proof}[Proof of Theorem~\ref{thm:main:cohomology}]
Since solvable extensions form a distinguished class, (1) reduces by induction to the case where $F([\eta])=0$. Compute $H^i_\fraka(R)$ using a \v Cech complex $C^\bullet(\bsx;R)$, where $\bsx=x_0,\dots,x_n$ are nonzero elements generating the ideal $\fraka$; recall that $C^\bullet(\bsx;R)$ is the complex
\[
0\to R\to\bigoplus_{i=0}^nR_{x_i}\to\bigoplus_{i<j}R_{x_ix_j}\to\cdots\to R_{x_0\cdots x_n}\to 0\,.
\]
Consider a cycle $\eta$ in $C^i(\bsx;R)$ that maps to $[\eta]$ in $H^i_\fraka(R)$. Since $F([\eta])=0$, the cycle $F(\eta)$ is a boundary, i.e., $F(\eta)=\del(\alpha)$ for some $\alpha\in C^{i-1}(\bsx;R)$.

Let $\mu_1,\dots,\mu_m$ be the square-free monomials of degree $i-2$ in the elements $x_1,\dots,x_n$, and regard $C^{i-1}(\bsx;R)=C^{i-1}(x_0,\dots,x_n;R)$ as
\[
R_{x_0\mu_1}\oplus\cdots\oplus R_{x_0\mu_m}\oplus C^{i-1}(x_1,\dots,x_n;R)\,.
\]
There exist a power $q$ of the characteristic $p$ of $R$, and elements $b_1,\dots,b_m$ in $R$, such that $\alpha$ can be written in the above direct sum as
\[
\alpha=\left(\frac{b_1}{(x_0\mu_1)^q},\ \dots,\ \frac{b_m}{(x_0\mu_m)^q},\ *,\ \dots,\ *\right).
\]
Consider the polynomials
\[
T^p+x_0^qT-b_i\qquad\text{ for }i=1,\dots,m\,,
\]
and let $L$ be a finite extension field where these have roots $t_1,\dots,t_m$ respectively. By Lemma~\ref{lemma:solvable}, we may assume $L$ is Galois over $\ff(R)$ with the Galois group being solvable. Let $S$ be a module-finite extension of $R$ that contains $t_1,\dots,t_m$, and has $L$ as its fraction field; if $R$ is excellent, we may take $S$ to be the integral closure of $R$ in $L$.

In the module $C^{i-1}(\bsx;S)$ one then has
\[
\alpha=\left(\frac{t_1^p+x_0^qt_1}{(x_0\mu_1)^q},\ \dots,\ \frac{t_m^p+x_0^qt_m}{(x_0\mu_m)^q},\ *,\ \dots,\ *\right)=F(\beta)+\gamma\,,
\]
where
\[
\beta=\left(\frac{t_1}{(x_0\mu_1)^{q/p}},\ \dots,\ \frac{t_m}{(x_0\mu_m)^{q/p}},\ 0\ ,\ \dots,0\right)
\]
and
\[
\gamma=\left(\frac{t_1}{\mu_1^q},\ \dots,\ \frac{t_m}{\mu_m^q},\ *\ ,\dots,\ *\right)
\]
are elements of
\[
C^{i-1}(\bsx;S)=S_{x_0\mu_1}\oplus\cdots\oplus S_{x_0\mu_m}\oplus C^{i-1}(x_1,\dots,x_n;S)\,.
\]
Since $F(\eta)=\del(F(\beta)+\gamma)$, we have
\[
F\big(\eta-\del(\beta)\big)=\del(\gamma)\,.
\]
But $[\eta]=[\eta-\del(\beta)]$ in $H^i_\fraka(S)$, so after replacing $\eta$ we may assume that
\[
F(\eta)=\del(\gamma)\,.
\]
Next, note that $\gamma$ is an element of $C^{i-1}(1,x_1,\dots,x_n;S)$, viewed as a submodule of $C^{i-1}(\bsx;S)$. There exits $\zeta$ in $C^{i-2}(1,x_1,\dots,x_n;S)$ such that
\[
\del(\zeta)=\left(\frac{t_1}{\mu_1^q},\ \dots,\ \frac{t_m}{\mu_m^q},\ *\ ,\dots,\ *\right).
\]
Since
\[
F(\eta)=\del\big(\gamma-\del(\zeta)\big)\,,
\]
after replacing $\gamma$ we may assume that the first $m$ coordinate entries of $\gamma$ are $0$, i.e., that
\[
\gamma=\left(0,\ \dots,\ 0,\ \frac{c_1}{\lambda_1^Q}\ ,\dots,\ \frac{c_l}{\lambda_l^Q}\right),
\]
where $Q$ is a power of $p$, the $c_i$ belong to $S$, and $\lambda_1,\dots,\lambda_l$ are the square-free monomials of degree $i-1$ in $x_1,\dots,x_n$.

The coordinate entries of $\del(\gamma)$ include each $c_i/\lambda_i^Q$. Since $\del(\gamma)=F(\eta)$, each $c_i/\lambda_i^Q$ is a $p$-th power in $\ff(S)$; it follows that each $c_i$ has a $p$-th root in $\ff(S)$. After enlarging $S$ by adjoining each $c_i^{1/p}$, we see that $\gamma=F(\xi)$ for an element $\xi$ of $C^{i-1}(\bsx;S)$. But then
\[
F(\eta)=\del\big(F(\xi)\big)=F\big(\del(\xi)\big)\,.
\]
Since the Frobenius action on $C^i(\bsx;S)$ is injective, we have $\eta=\del(\xi)$, which proves (1).

For (2), it suffices to construct a module-finite generically separable extension $S$ such that $H^i_\frakm(R)\to H^i_\frakm(S)$ is zero for $i<\dim R$; to obtain a generically Galois extension, enlarge $S$ to a module-finite extension whose fraction field is the Galois closure of $\ff(S)$ over $\ff(R)$.

We use induction on $d=\dim R$, as in \cite{HL}. If $d=0$, there is nothing to be proved; if $d=1$, the inductive hypothesis is again trivially satisfied since $H^0_\frakm(R)=0$. Fix $i<\dim R$. Let $(A,\frakM)$ be a Gorenstein local ring that has $R$ as a homomorphic image, and set
\[
M=\Ext_A^{\dim A-i}(R,A)\,.
\]
Let $\frakp_1,\dots,\frakp_s$ be the elements of the set $\Ass_A M\smallsetminus\{\frakM\}$. 

Let $\frakq$ be a prime ideal of $R$ that is not maximal. Since $R$ is catenary, one has
\[
\dim R=\dim R_\frakq+\dim R/\frakq\,.
\]
Thus, the condition $i<\dim R$ may be rewritten as
\[
i-\dim R/\frakq<\dim R_\frakq\,.
\]
Using the inductive hypothesis and Lemma~\ref{lemma:unlocalize}, there exists a module-finite extension $R'$ of $R$ such that $\ff(R')$ is a separable field extension of $\ff(R_\frakq)=\ff(R)$, and the induced map
\begin{equation}
\label{eq:map}
H^{i-\dim R/\frakq}_{\frakq R_\frakq}(R_\frakq)\to H^{i-\dim R/\frakq}_{\frakq R_\frakq}(R'_\frakq)
\end{equation}
is zero. Taking the compositum of finitely many such separable extensions inside a fixed algebraic closure of $\ff(R)$, there exists a module-finite generically separable extension $R'$ of $R$ such that the map~\eqref{eq:map} is zero when $\frakq$ is any of the primes $\frakp_1R,\dots,\frakp_sR$. We claim that the image of the induced map $H^i_\frakm(R)\to H^i_\frakm(R')$ has finite length.

Using local duality over $A$, it suffices to show that
\[
M'=\Ext_A^{\dim A-i}(R',A)\to\Ext_A^{\dim A-i}(R,A)=M
\]
has finite length. This, in turn, would follow if
\[
M'_\frakp=\Ext_{A_\frakp}^{\dim A-i}(R'_\frakp,A_\frakp)\to\Ext_{A_\frakp}^{\dim A-i}(R_\frakp,A_\frakp)=M_\frakp
\]
is zero for each prime ideal $\frakp$ in $\Ass_AM\smallsetminus\{\frakM\}$. Using local duality over $A_\frakp$, it suffices to verify the vanishing of
\[
H^{\dim A_\frakp-\dim A+i}_{\frakp R_\frakp}(R_\frakp)\to H^{\dim A_\frakp-\dim A+i}_{\frakp R_\frakp}(R'_\frakp)
\]
for each $\frakp$ in $\Ass_AM\smallsetminus\{\frakM\}$. This, however, follows from our choice of $R'$ since
\[
\dim A_\frakp-\dim A+i\ =\ i-\dim A/\frakp\ =\ i-\dim R/\frakp R\,.
\]

What we have arrived at thus far is a module-finite generically separable extension $R'$ of $R$ such that the image of $H^i_\frakm(R)\to H^i_\frakm(R')$ has finite length; in particular, this image is finitely generated. Working with one generator at a time and taking the compositum of extensions, given $[\eta]$ in $H^i_\frakm(R')$, it suffices to construct a module-finite generically separable extension $S$ of $R'$ such that $[\eta]$ maps to $0$ under $H^i_\frakm(R')\to H^i_\frakm(S)$. 

By Theorem~\ref{thm:HL}, there exists a module-finite extension $R_1$ of $R'$ such that $[\eta]$ maps to $0$ under $H^i_{\frakm}(R')\to H^i_{\frakm}(R_1)$. Setting $R_2$ to be the separable closure of $R'$ in $R_1$, the image of $[\eta]$ in $H^i_{\frakm}(R_2)$ lies in $H^i_{\frakm}(R_2)_{nil}$. The result now follows by (1).
\end{proof}

\begin{corollary}
\label{corollary:sepCM}
Let $(R,\frakm)$ be a local domain of prime characteristic that is a homomorphic image of a Gorenstein ring. Then $H^i_\frakm(R^\sep)=0$ for each $i<\dim R$.

Moreover, each system of parameters for $R$ is a regular sequence on $R^\sep$, i.e., $R^\sep$ is a separable balanced big Cohen-Macaulay algebra for $R$.
\end{corollary}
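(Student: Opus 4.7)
The plan is to derive the corollary from Theorem~\ref{thm:main:cohomology}\,(2) by a direct limit argument, and then extract the balanced big Cohen-Macaulay property. Write $R^\sep$ as the directed union of its module-finite generically separable $R$-subalgebras $T$. Since local cohomology commutes with filtered colimits, every class $[\eta]\in H^i_\frakm(R^\sep)$ lifts to some $[\eta_T]\in H^i_\frakm(T)$, so it suffices to produce, for each such $T$, a module-finite generically separable extension $T'\supseteq T$ contained in $R^\sep$ such that $[\eta_T]$ dies in $H^i_\frakm(T')$.

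To construct $T'$, note that the semilocal domain $T$ has finitely many maximal ideals $\mathfrak{n}_1,\dots,\mathfrak{n}_k$, and the Mayer-Vietoris decomposition (the ideals are pairwise coprime) yields
$$H^i_\frakm(T)=\bigoplus_{j=1}^{k} H^i_{\mathfrak{n}_j}(T_{\mathfrak{n}_j}).$$
Each $T_{\mathfrak{n}_j}$ is a local domain of dimension $d=\dim R$ that is a homomorphic image of a Gorenstein local ring: present $T$ as a quotient of $A[Y_1,\dots,Y_n]$, where $R=A/I$ with $A$ Gorenstein, and localize. Apply Theorem~\ref{thm:main:cohomology}\,(2) to each $T_{\mathfrak{n}_j}$ to obtain a module-finite generically Galois extension $U_j$ with $H^i_{\mathfrak{n}_j}(T_{\mathfrak{n}_j})\to H^i_{\mathfrak{n}_j}(U_j)$ zero for all $i<d$. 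Lemma~\ref{lemma:unlocalize}, applied to $T$ with prime $\mathfrak{n}_j$, then yields a domain $T^{(j)}$ module-finite over $T$ with $T^{(j)}_{\mathfrak{n}_j}=U_j$ and fraction field $\ff(U_j)$ separable over $\ff(T)$. Take $T'$ to be the compositum of $T^{(1)},\dots,T^{(k)}$ inside a fixed algebraic closure of $\ff(T)$; then $T'$ is module-finite generically separable over $R$, so $T'\subseteq R^\sep$, and the $j$-th component of the induced map $H^i_\frakm(T)\to H^i_\frakm(T')$ factors through the zero map $H^i_{\mathfrak{n}_j}(T_{\mathfrak{n}_j})\to H^i_{\mathfrak{n}_j}(U_j)$.

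The main obstacle I expect is this last verification: tracking how the Mayer-Vietoris decomposition transforms under the module-finite extension $T\to T'$, and confirming that $T^{(j)}\subseteq T'$ together with $T^{(j)}_{\mathfrak{n}_j}=U_j$ genuinely forces each summand in the image of $[\eta_T]$ to vanish. Once the vanishing $H^i_\frakm(R^\sep)=0$ for $i<\dim R$ is in hand, the second assertion follows. Since $R^\sep\subseteq R^+$ and $\frakm R^+\neq R^+$ by integrality, we have $R^\sep\neq\frakm R^\sep$, and the standard argument passing from vanishing of local cohomology to the balanced big Cohen-Macaulay property—exactly as in the Hochster-Huneke derivation of the big Cohen-Macaulay property of $R^+$ from the vanishing implicit in Theorem~\ref{theorem:HH}—shows that every system of parameters for $R$ is a regular sequence on $R^\sep$.
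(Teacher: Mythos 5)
Your proposal is correct and fills in, carefully, the details behind the paper's one-line claim that Theorem~\ref{thm:main:cohomology}\,(2) implies $H^i_\frakm(R^\sep)=0$: you write $R^\sep$ as a directed union of module-finite generically separable extensions $T$, pass to the localizations $T_{\mathfrak{n}_j}$ (each still local, of dimension $\dim R$, and a homomorphic image of a Gorenstein local ring), apply the theorem there, and use Lemma~\ref{lemma:unlocalize} and a compositum to descend; the second assertion is then deduced exactly as the paper does, by citing the standard argument of \cite[Corollary~2.3]{HL}. The ``obstacle'' you flag at the end of the second paragraph is not a real one: the decomposition $H^i_\frakm(-)\cong\bigoplus_j H^i_{\mathfrak{n}_j}(-)$ is a natural isomorphism of functors on $T$-modules (the $\mathfrak{n}_j$ being pairwise comaximal with intersection $\sqrt{\frakm T}$), so it is automatically compatible with the $T$-module map $T\to T'$, and the $j$-th component factors through $H^i_{\mathfrak{n}_jT_{\mathfrak{n}_j}}(T_{\mathfrak{n}_j})\to H^i_{\mathfrak{n}_jT_{\mathfrak{n}_j}}(U_j)=0$ since $T^{(j)}\subseteq T'$.
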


\begin{proof}
Theorem~\ref{thm:main:cohomology}\,(2) implies that $H^i_\frakm(R^\sep)=0$ for each $i<\dim R$. The proof that this implies the second statement is similar to the proof of \cite[Corollary~2.3]{HL}.
\end{proof}

\begin{proof}[Proof of Theorem~\ref{thm:main:ideal}]
Let $p$ be the characteristic of $R$. If $z\in\fraka^F$, there exists a prime power $q=p^e$ with $z^q\in\fraka^{[q]}$. In this case, $z^{q/p}$ belongs to the Frobenius closure of $\fraka^{[q/p]}$, and
\[
(z^{q/p})^p\in(\fraka^{[q/p]})^{[p]}\,.
\]
Since solvable extensions form a distinguished class, we reduce to the case $e=1$, i.e., $q=p$.

There exist nonzero elements, $a_0,\dots,a_m\in\fraka$ and $b_0,\dots,b_m\in R$ with
\[
z^p=\sum_{i=0}^mb_ia_i^p\,.
\]
Consider the polynomials
\[
T^p+a_0^pT-b_i\qquad\text{ for }i=1,\dots,m\,,
\]
and let $L$ be a finite extension field where these have roots $t_1,\dots,t_m$ respectively. By Lemma~\ref{lemma:solvable}, we may assume $L$ is Galois over $\ff(R)$ with the Galois group being solvable. Set
\begin{equation}\label{eqn:key}
t_0=\frac{1}{a_0}\Big(z-\sum_{i=1}^mt_ia_i\Big)\,.
\end{equation}
Taking $p$-th powers, we have
\[
t_0^p\ =\ \frac{1}{a_0^p}\Big(\sum_{i=0}^mb_ia_i^p-\sum_{i=1}^mt_i^pa_i^p\Big)\
=\ b_0 + \frac{1}{a_0^p}\sum_{i=1}^m\big(b_i-t_i^p\big)a_i^p
=\ b_0 + \sum_{i=1}^mt_ia_i^p\,.
\]
Thus, $t_0$ belongs to the integral closure of $R[t_1,\dots,t_m]$ in its field of fractions. Let $S$ be a module-finite extension of $R$ that contains $t_0,\dots,t_m$, and has $L$ as its fraction field; if $R$ is excellent, we may take $S$ to be the integral closure of $R$ in $L$. Since~\eqref{eqn:key} may be rewritten as
\[
z=\sum_{i=0}^mt_ia_i\,,
\]
it follows that $z\in\fraka S$, completing the proof of (1).

(2) follows from \cite[Corollary~3.4]{Singh:sep}, though we include a proof using~(1). There exists a module-finite extension domain $T$ such that $z\in\fraka T$. Decompose the field extension $\ff(R)\subseteq\ff(T)$ as a separable extension $\ff(R)\subseteq\ff(T)^{\sep}$ followed by a purely inseparable extension $\ff(T)^{\sep}\subseteq\ff(T)$. Let $T_0$ be the integral closure of $R$ in $\ff(T)^{\sep}$.

Since $T$ is a purely inseparable extension of $T_0$, and $z\in\fraka T$, it follows that 
$z$ belongs to the Frobenius closure of the ideal $\fraka T_0$. By (2) there exists a generically separable extension $S_0$ of $T_0$ with $z\in\fraka S_0$. Enlarge $S_0$ to a generically Galois extension $S$ of $R$. This concludes the argument in the case $R$ is excellent; in the event that $S$ is not module-finite over $R$, one may replace it by a subring satisfying $z\in\fraka S$ and having the same fraction field.
\end{proof}

The equational construction used in the proof of Theorem~\ref{thm:main:ideal}\,(1) arose from the study of symplectic invariants in \cite{Singh:inv}.

%%%%%%%%%%%%%%%%%%%%%%%%%%%%%%%%%%%%%%%%%%%%%%%%%%%%%%%%%%%%%%%%%%%%%%%%
\section{Some Galois groups that are not solvable}
%%%%%%%%%%%%%%%%%%%%%%%%%%%%%%%%%%%%%%%%%%%%%%%%%%%%%%%%%%%%%%%%%%%%%%%%

Let $R$ be a domain of prime characteristic, and let $\fraka$ be an ideal of $R$. If $z$ is an element of $\fraka^F$, Theorem~\ref{thm:main:ideal}\,(1) states that there exists a solvable module-finite extension $S$ with $z\in\fraka S$. In the following example one has $z\in\fraka^+$, and we conjecture $z\notin\fraka S$ for any module-finite generically Galois extension $S$ with $\Gal(S/R)$ solvable.

\begin{example}
\label{example:plus}
Let $a,b,c_1,c_2$ be algebraically independent over $\FF_p$, and set $R$ be the hypersurface
\[
\frac{\FF_p(a,b,c_1,c_2)[x,y,z]}{\big(z^{p^2}+c_1(xy)^{p^2-p}z^p+c_2(xy)^{p^2-1}z+ax^{p^2}+by^{p^2}\big)}\,.
\]
We claim $z\in(x,y)^+$. Let $u,v$ be elements of $R^+$ that are, respectively, roots of the polynomials
\begin{equation}
\label{eq:dickson:eq1}
T^{p^2}+c_1y^{p^2-p}T^p+c_2y^{p^2-1}T+a\,,
\end{equation}
and
\[
T^{p^2}+c_1x^{p^2-p}T^p+c_2x^{p^2-1}T+b\,.
\]
Set $S$ to be the integral closure of $R$ in the Galois closure of $\ff(R)(u,v)$ over $\ff(R)$. Then $(z-ux-vy)/xy$ is an element of $S$, since it is a root of the monic polynomial
\[
T^{p^2}+c_1T^p+c_2T\,.
\]
It follows that $z\in(x,y)S$.

We next show that $\Gal(S/R)$ is not solvable for the extension $S$ constructed above. Since $u$ is a root of~\eqref{eq:dickson:eq1}, $u/y$ is a root of
\begin{equation}
\label{eq:dickson:eq2}
T^{p^2}+c_1T^p+c_2T+\frac{a}{y^{p^2}}\,.
\end{equation}
The polynomial~\eqref{eq:dickson:eq2} is irreducible over $\FF_q(c_1,c_2,a/y^{p^2})$, and hence over the purely transcendental extension $\FF_q(c_1,c_2,a,x,y,z)=\ff(R)$. Since $\ff(S)$ is a Galois extension of $\ff(R)$ containing a root of~\eqref{eq:dickson:eq2}, it contains all roots of~\eqref{eq:dickson:eq2}. As~\eqref{eq:dickson:eq2} is separable, its roots are distinct; taking differences of roots, it follows that $\ff(S)$ contains the $p^2$ distinct roots of
\begin{equation}
\label{eq:dickson:eq3}
T^{p^2}+c_1T^p+c_2T\,.
\end{equation}
We next verify that the Galois group of~\eqref{eq:dickson:eq3} over $\ff(R)$ is $\GL_2(\FF_q)$.

Quite generally, let $L$ be a field of characteristic $p$. Consider the standard linear action of $\GL_2(\FF_p)$ on the polynomial ring $L[x_1,x_2]$. The ring of invariants for this action is generated over $L$ by the \emph{Dickson invariants} $c_1,c_2$, which occur as the coefficients in the polynomial
\[
\prod_{\alpha,\beta\in\FF_p}(T-\alpha x_1-\beta x_2)=T^{p^2}+c_1T^p+c_2T\,,
\]
see \cite{Dickson} or \cite[Chapter~8]{Benson}. Hence the extension $L(x_1,x_2)/L(c_1,c_2)$ has Galois group $\GL_2(\FF_p)$.

It follows from the above that if $c_1,c_2$ are algebraically independent elements over a field $L$ of characteristic $p$, then the polynomial
\[
T^{p^2}+c_1T^p+c_2T\ \in\ L(c_1,c_2)[T]
\]
has Galois group $\GL_2(\FF_p)$.

The group $\PSL_2(\FF_p)$ is a subquotient of $\GL_2(\FF_p)$, and, we conjecture, a subquotient of $\Gal(S/R)$ for \emph{any} module-finite generically Galois extension $S$ of $R$ with $z\in\fraka S$. For $p\ge 5$, the group $\PSL_2(\FF_p)$ is a nonabelian simple group; thus, conjecturally, $\Gal(S/R)$ is not solvable for any module-finite generically Galois extension~$S$ with $z\in\fraka S$.
\end{example}

\begin{example}
\label{example:plus2}
Extending the previous example, let $a,b,c_1,\dots,c_n$ be algebraically independent elements over $\FF_q$, and set $R$ to be the polynomial ring $\FF_q(a,b,c_1,\dots,c_n)[x,y,z]$ modulo the principal ideal generated by
\begin{multline*}
z^{q^n}+c_1(xy)^{q^n-q^{n-1}}z^{q^{n-1}}+c_2(xy)^{q^n-q^{n-2}}z^{q^{n-2}}+\cdots+c_n(xy)^{q^n-1}z\\
+ax^{q^n}+by^{q^n}\,.
\end{multline*}
Then $z\in(x,y)^+$; imitate the previous example with $u,v$ being roots of 
\[
T^{q^n}+c_1y^{q^n-q^{n-1}}T^{q^{n-1}}+c_2y^{q^n-q^{n-2}}T^{q^{n-2}}+\cdots+c_ny^{q^n-1}T+a\,,
\]
and
\[
T^{q^n}+c_1x^{q^n-q^{n-1}}T^{q^{n-1}}+c_2x^{q^n-q^{n-2}}T^{q^{n-2}}+\cdots+c_nx^{q^n-1}T
+b\,.
\]

If $S$ is any module-finite generically Galois extension of $R$ with $z\in\fraka S$, we conjecture that $\ff(S)$ contains the splitting field of 
\begin{equation}
\label{eqn:dickson:n}
T^{q^n}+c_1T^{q^{n-1}}+c_2T^{q^{n-2}}+\cdots+c_nT
\,.
\end{equation}
Using a similar argument with Dickson invariants, the Galois group of~\eqref{eqn:dickson:n} over $\ff(R)$ is $\GL_n(\FF_q)$. Its subquotient $\PSL_n(\FF_q)$ is a nonabelian simple group for $n\ge 3$, and for $n=2$, $q\ge 4$.
\end{example}

Likewise, we record conjectural examples $R$ where $H^i_\frakm(R)\to H^i_\frakm(S)$ is nonzero for each module-finite generically Galois extension $S$ with $\Gal(S/R)$ solvable:

\begin{example}
\label{example:lc}
Let $a,b,c_1,c_2$ be algebraically independent over $\FF_p$, and consider the hypersurface
\[
A=\frac{\FF_p(a,b,c_1,c_2)[x,y,z]}{\big(z^{2p^2}+c_1(xy)^{p^2-p}z^{2p}+c_2(xy)^{p^2-1}z^2+ax^{p^2}+by^{p^2}\big)}\,.
\]
Let $(R,\frakm)$ be the Rees ring $A[xt,yt,zt]$ localized at the maximal ideal $x,y,z,xt,yt,zt$. The elements $x,yt,y+xt$ form a system of parameters for $R$, and the relation
\[
z^2t\cdot(y+xt)\ =\ z^2t^2\cdot x+ z^2\cdot yt
\]
defines an element $[\eta]$ of $H^2_\frakm(R)$. We conjecture that if $S$ is any module-finite generically Galois extension such that $[\eta]$ maps to $0$ under the induced map $H^2_\frakm(R)\to H^2_\frakm(S)$, then $\ff(S)$ contains the splitting field of 
\[
T^{p^2}+c_1T^p+c_2T\,,
\]
and hence that $\Gal(S/R)$ is not solvable if $p\ge5$.
\end{example}

%%%%%%%%%%%%%%%%%%%%%%%%%%%%%%%%%%%%%%%%%%%%%%%%%%%%%%%%%%%%%%%%%%%%%%%%
\section{Graded rings and extensions}
%%%%%%%%%%%%%%%%%%%%%%%%%%%%%%%%%%%%%%%%%%%%%%%%%%%%%%%%%%%%%%%%%%%%%%%%

Let $R$ be an $\NN$-graded domain that is finitely generated over a field $R_0$. Set $R^\GR$ to be the $\QQ_{\ge0}$-graded ring generated by elements of $R^+$ that can be assigned a degree such that they then satisfy a homogeneous equation of integral dependence over $R$. Note that ${[R^\GR]}_0$ is the algebraic closure of the field $R_0$. One has the following:

\begin{theorem}\cite[Theorem~6.1]{HHbig}
\label{thm:hh:graded}
Let $R$ be an $\NN$-graded domain that is finitely generated over a field $R_0$ of prime characteristic. Then each homogeneous system of parameters for $R$ is a regular sequence on $R^\GR$.
\end{theorem}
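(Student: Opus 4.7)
The plan is to reduce the graded statement to the ungraded Theorem~\ref{theorem:HH} by promoting an arbitrary module-finite integral extension of $R$ to a module-finite $\QQ_{\ge 0}$-graded extension inside $R^+$. Set $\frakm = R_+$ and $d = \dim R$. By the Koszul/local cohomology criterion (which remains valid in the $\QQ_{\ge 0}$-graded setting), it suffices to show that for each homogeneous system of parameters $x_1, \dots, x_d$ and each homogeneous relation
\[
z\, x_{k+1} \ =\ \sum_{i=1}^k r_i x_i \qquad (z, r_i \in R \text{ homogeneous}),
\]
there exists a module-finite $\QQ_{\ge 0}$-graded extension $S \subseteq R^\GR$ of $R$ with $z \in (x_1, \dots, x_k)\,S$. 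Since $R^\GR$ is the directed union of all such $S$, the conclusion $H^i_\frakm(R^\GR) = 0$ for $i<d$ follows.

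First, invoke Theorem~\ref{theorem:HH} to obtain \emph{some} module-finite integral extension $T$ of $R$ (a priori without any grading) in which $z \in (x_1, \dots, x_k) T$, say $z = \sum_{i=1}^k t_i x_i$ with $t_i \in T$. Formally, this requires passing to the local ring at $\frakm$, taking completions to ensure excellence, and then returning to a module-finite extension of $R$ via a graded analogue of Lemma~\ref{lemma:unlocalize}; this poses no issue because localization at a graded prime and completion preserve the graded structure we wish to track.

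The core of the proof lies in promoting $T$ to a $\QQ_{\ge 0}$-graded extension. Extend the $\NN$-grading on $R$ to a $\ZZ$-grading on $\ff(R)$ by $\deg(f/g) = \deg f - \deg g$ for homogeneous $f, g$. For any $\alpha \in R^+$ satisfying a monic integral equation $\alpha^n + a_{n-1}\alpha^{n-1} + \cdots + a_0 = 0$ with $a_j \in R$, an assignment $\deg \alpha = \delta \in \QQ$ makes the equation homogeneous precisely when $\deg a_j + j\delta$ is independent of $j$; by a Newton-polygon-style argument applied to the minimal polynomial of each $t_i$, one may decompose $t_i$ into a sum of graded elements of $R^+$, each lying in $R^\GR$. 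Substituting and extracting the homogeneous component of degree $\deg z$ from the relation $z = \sum t_i x_i$ yields the desired homogeneous equation with coefficients $t_i' \in R^\GR$ of degree $\deg z - \deg x_i$, and these generate the required module-finite $\QQ_{\ge 0}$-graded extension $S$.

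The main obstacle is the graded-ification step: one must verify that every integral element of $R^+$ is a finite sum of elements of $R^+$ each satisfying a \emph{homogeneous} monic equation over $R$ under an appropriate $\QQ$-degree assignment. This is a valuation-theoretic statement amounting to the claim that the grading on $R$ extends coherently to $R^\GR$ and that taking homogeneous components of an $R$-linear relation holding in $R^+$ (with homogeneous left-hand side) produces a relation with coefficients in $R^\GR$. One can avoid committing to a single global $\QQ$-grading on $R^+$ by working relation-by-relation: only finitely many elements of $T$ enter a given equation, and a compatible $\QQ$-grading on the finite extension they generate always exists.
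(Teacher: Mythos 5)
This statement is cited in the paper (to \cite[Theorem~6.1]{HHbig}) rather than proved, so there is no in-paper argument to compare against; I will therefore evaluate your proposal on its own terms.

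The proposal has a genuine gap at exactly the step you flag as the ``core'' of the argument: the claim that an element of $R^+$ appearing in a monomial-ideal membership over $R$ can be decomposed as a \emph{finite sum} of homogeneous elements, equivalently that a module-finite extension $R\subseteq T\subseteq R^+$ always sits inside a $\QQ_{\ge 0}$-graded module-finite extension. This is false. Take $R=\FF_p[x]$ with $p$ odd and the standard grading, and let $\alpha\in R^+$ satisfy $\alpha^2=x+1$. If $\alpha=\sum_{j}\beta_j$ with each $\beta_j\in R^+$ homogeneous of degree $\delta_j\ge 0$ with $\delta_0<\delta_1<\cdots$, then comparing graded pieces of $\alpha^2=1+x$ forces $\delta_0=0$, $\beta_0^2=1$, then $\delta_1=1$ and $2\beta_0\beta_1=x$, then $\delta_2=2$ and $2\beta_0\beta_2+\beta_1^2=0$, and so on; the recursion reproduces the binomial expansion of $(1+x)^{1/2}$ and never terminates. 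Thus $\alpha$ is not a finite sum of homogeneous elements, and $R[\alpha]$ admits no $\QQ_{\ge 0}$-grading compatible with that of $R$. Consequently the assertion ``a compatible $\QQ$-grading on the finite extension generated by $t_1,\dots,t_k$ always exists'' does not hold, and the extraction of homogeneous components from $z=\sum t_ix_i$ has no meaning in $T$.

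The appeal to a Newton-polygon argument does not rescue the step: Newton polygons control the \emph{multiplicative} decomposition (factorization by valuation of roots) of the minimal polynomial, not an \emph{additive} decomposition of $t_i$ into graded summands. These are different things, and the former does not yield the latter. The actual proof in \cite{HHbig} does not attempt a post-hoc homogenization of an arbitrary module-finite trivializing extension; instead, the Frobenius/equational argument that produces the trivializing extension is carried out within the graded category from the outset, so that the extensions constructed are graded by design. If you want to salvage the reduction-to-Theorem~\ref{theorem:HH} strategy, you would need to replace the graded-ification step by a genuine argument of that kind; as written, that step is where the proof breaks.
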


Let $R$ be as in the above theorem. Since $R^\GR$ and $R^\sep$ are Cohen-Macaulay $R$-algebras, it is natural to ask whether there exists a $\QQ$-graded separable Cohen-Macaulay $R$-algebra. The answer to this is negative:

\begin{example}
\label{example:grsep}
Let $R$ be the Rees ring
\[
\frac{\bar{\FF}_2[x,y,z]}{(x^3+y^3+z^3)}[xt,yt,zt]
\]
with the $\NN$-grading where the generators $x,y,z,xt,yt,zt$ have degree~$1$. Set $B$ to be the $R$-algebra generated by the homogeneous elements of $R^\GR$ that are separable over $\ff(R)$. We prove that~$B$ is not a balanced Cohen-Macaulay $R$-module.

The elements $x$, $yt$, $y+xt$ are a system of parameters for $R$. Suppose, to the contrary, that they form a regular sequence on $B$. Since
\[
z^2t\cdot(y+xt)\ =\ z^2t^2\cdot x+ z^2\cdot yt\,,
\]
it follows that $z^2t\in(x,yt)B$. Thus, there exist elements $u,v\in B_1$ with
\begin{equation}\label{eqn:grsep:1}
z^2t\ =\ u\cdot x\ +\ v\cdot yt\,.
\end{equation}
Since $z^3=x^3+y^3$, we also have $z^2\ =\ x\sqrt{xz}+y\sqrt{yz}$ in $R^\GR$, and hence
\begin{equation}\label{eqn:grsep:2}
z^2t\ =\ t\sqrt{xz}\cdot x\ +\ \sqrt{yz}\cdot yt\,.
\end{equation}
Comparing~\eqref{eqn:grsep:1} and~\eqref{eqn:grsep:2}, we see that
\[
(u+t\sqrt{xz})\cdot x\ =\ (v+\sqrt{yz})\cdot yt
\]
in $R^\GR$. But $x,yt$ is a regular sequence on $R^\GR$, so there exists an element $c$ in ${[R^\GR]}_0$ with $u+t\sqrt{xz}=cyt$ and $v+\sqrt{yz}=cx$. Since ${[R^\GR]}_0=\bar{\FF}_2$, it follows that $c\in R$, and hence that $\sqrt{yz}\in B$. This contradicts the hypothesis that elements of $B$ are separable over $\ff(R)$.

The above argument shows that any graded Cohen-Macaulay $R$-algebra must contain the elements 
$\sqrt{yz}$ and $t\sqrt{xz}$.
\end{example}

We next show that no module-finite $\QQ$-graded extension domain of the ring $R$ in Example~\ref{example:grsep} is Cohen-Macaulay. 

\begin{example}
\label{example:nfgcm}
Let $R$ be the Rees ring from Example~\ref{example:grsep}, and let $S$ be a graded Cohen-Macaulay ring with $R\subseteq S\subseteq R^\GR$. We prove that $S$ is not finitely generated over $R$.

By the previous example, $S$ contains $\sqrt{yz}$ and $t\sqrt{xz}$. Using the symmetry between $x,y,z$, it follows that $\sqrt{xy}$, $\sqrt{xz}$, $t\sqrt{xy}$, $t\sqrt{yz}$ are all elements of $S$. We prove inductively that $S$ contains
\begin{alignat}3\label{eqn:ex1:4}
x^{1-2/q}(yz)^{1/q}, &\qquad& y^{1-2/q}(xz)^{1/q}, &\qquad& z^{1-2/q}(xy)^{1/q},\\
tx^{1-2/q}(yz)^{1/q}, &\qquad& ty^{1-2/q}(xz)^{1/q}, &\qquad& tz^{1-2/q}(xy)^{1/q},\nonumber
\end{alignat}
for each $q=2^e$ with $e\ge 1$. The case $e=1$ has been settled.

Suppose $S$ contains the elements \eqref{eqn:ex1:4} for some $q=2^e$. Then, one has
\begin{multline*}
x^{1-2/q}(yz)^{1/q}\cdot ty^{1-2/q}(xz)^{1/q}\cdot(y+xt)\\
=\ tx^{1-2/q}(yz)^{1/q}\cdot ty^{1-2/q}(xz)^{1/q}\cdot x\ +\ x^{1-2/q}(yz)^{1/q}\cdot y^{1-2/q}(xz)^{1/q}\cdot yt\,.
\end{multline*}
Using as before that $x$, $yt$, $y+xt$ is a regular sequence on $S$, we conclude
\[
x^{1-2/q}(yz)^{1/q}\cdot ty^{1-2/q}(xz)^{1/q}\ =\ u\cdot x\ +\ v\cdot yt
\]
for some $u,v\in S_1$. Simplifying the left hand side, the above reads
\begin{equation}\label{eqn:ex1:5}
t(xy)^{1-1/q}z^{2/q}\ =\ u\cdot x\ +\ v\cdot yt\,.
\end{equation}
Taking $q$-th roots in
\[
z^2\ =\ x\sqrt{xz}+y\sqrt{yz}
\]
and multiplying by $t(xy)^{1-1/q}$ yields
\begin{equation}\label{eqn:ex1:6}
t(xy)^{1-1/q}z^{2/q}\ =\ ty^{1-1/q}(xz)^{1/2q}\cdot x\ +\ x^{1-1/q}(yz)^{1/2q}\cdot yt\,.
\end{equation}
Comparing~\eqref{eqn:ex1:5} and~\eqref{eqn:ex1:6}, we see that
\[
\big(u+ty^{1-1/q}(xz)^{1/2q}\big)\cdot x\ =\ \big(v+x^{1-1/q}(yz)^{1/2q}\big)\cdot yt\,,
\]
so there exists $c$ in ${[R^\GR]}_0=\bar{\FF}_2$ with
\[
u+ty^{1-1/q}(xz)^{1/2q}=cyt\qquad\text{ and }\qquad v+x^{1-1/q}(yz)^{1/2q}=cx\,.
\]
It follows that $ty^{1-1/q}(xz)^{1/2q}$ and $x^{1-1/q}(yz)^{1/2q}$ are elements of $S$. In view of the symmetry between $x,y,z$, this completes the inductive step. Setting
\[
\theta=\frac{xy}{z^2}\,,
\]
we have proved that
\[
\theta^{1/q}\ \in\ \ff(S)\qquad\text{ for each }q=2^e\,.
\]

We claim $\theta^{1/2}$ does not belong to $\ff(R)$. Indeed if it does, then $(xy)^{1/2}$ belongs to $\ff(R)$, and hence to $R$, as $R$ is normal; this is readily seen to be false.
The extension
\[
\ff(R)\ \subseteq\ \ff(R)(\theta^{1/q})
\]
is purely inseparable, so the minimal polynomial of $\theta^{1/q}$ over $\ff(R)$ has the form $T^Q-\theta^{Q/q}$ for some $Q=2^E$. Since $\theta^{1/2}\notin\ff(R)$, we conclude that the minimal polynomial is $T^q-\theta$. Hence
\[
\left[\ff(R)(\theta^{1/q}):\ff(R)\right]=q\qquad\text{ for each }q=2^e\,.
\]
It follows that $[\ff(S):\ff(R)]$ is not finite.
\end{example}

Theorem~\ref{thm:HL} and Theorem~\ref{thm:main:cohomology}\,(2) discuss the vanishing of the image of $H^i_\frakm(R)$ for $i<\dim R$. In the case of graded rings, one also has the following result for $H^d_\frakm(R)$.

\begin{proposition}
\label{prop:top}
Let $R$ be an $\NN$-graded domain that is finitely generated over a field $R_0$ of prime characteristic. Set $d=\dim R$. Then the submodule ${[H^d_\frakm(R)]}_{\ge 0}$ maps to zero under the induced map
\[
H^d_\frakm(R)\to H^d_\frakm(R^\GR)\,.
\]
Hence, there exists a module-finite $\QQ$-graded extension domain $S$ of $R$ such that the induced map ${[H^d_\frakm(R)]}_{\ge 0}\to H^d_\frakm(S)$ is zero.
\end{proposition}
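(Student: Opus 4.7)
The plan is to fix a homogeneous system of parameters $x_1,\dots,x_d$ for $R$, set $\sigma=\sum_i\deg x_i$, and to compute both $H^d_\frakm(R)$ and $H^d_\frakm(R^\GR)$ via the \v Cech complex on $\bsx$. The resulting map $H^d_\frakm(R)\to H^d_\frakm(R^\GR)$ is grading-preserving, and by Theorem~\ref{thm:hh:graded} the sequence $\bsx$ is regular on $R^\GR$; hence $H^d_\frakm(R^\GR)=\dlim_m R^\GR/(x_1^m,\dots,x_d^m)R^\GR$ with injective transition maps. A class in $[H^d_\frakm(R)]_n$ with $n\ge 0$ is represented by $[r/(x_1\cdots x_d)^m]$ for some homogeneous $r\in R$ of degree $n+m\sigma\ge m\sigma$, and this class vanishes in $H^d_\frakm(R^\GR)$ precisely when $r\in(x_1^m,\dots,x_d^m)R^\GR$. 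Thus the first assertion reduces to the inclusion: every homogeneous $r\in R$ of degree at least $m\sigma$ belongs to $(x_1^m,\dots,x_d^m)R^\GR$.

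The key step is to establish this inclusion by exploiting the supply of homogeneous integrally dependent elements in $R^\GR$. The goal is to produce explicit $s_1,\dots,s_d\in R^\GR$, homogeneous of degree $\deg r-m\deg x_i\ge m(\sigma-\deg x_i)\ge 0$, with $r=\sum_i s_ix_i^m$. The pattern to imitate is in Example~\ref{example:grsep}, where in $\bar{\FF}_2[x,y,z]/(x^3+y^3+z^3)$ the identity $z^2=x\sqrt{xz}+y\sqrt{yz}$ certifies $z^2\in(x,y)R^\GR$ even though $z^2\notin(x,y)R$; here $\sqrt{xz}$ and $\sqrt{yz}$ are homogeneous integrally dependent elements of $R^\GR$ by definition. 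For a general graded domain $R$, the plan is to produce analogous identities by adjoining fractional roots of carefully chosen products of $r$ and the $x_i$'s, iterating inside $R^\GR$ until $r$ lies in the expanded parameter ideal.

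For the second assertion, $H^d_\frakm(R)$ is an Artinian $R$-module whose graded pieces are finite-dimensional over $R_0$ and which is bounded above in degree by Serre vanishing for graded cohomology, so $[H^d_\frakm(R)]_{\ge 0}$ is a finite-dimensional graded $R_0$-vector space. Fix a finite homogeneous basis; by the first assertion each basis element is killed under $R\to R^\GR$, and since $R^\GR$ is a directed union of its module-finite graded sub-$R$-algebras, each basis element is already killed in one such sub-algebra. The module-finite $\QQ$-graded sub-$R$-algebra of $R^\GR$ generated by the resulting finite collection is then the desired $S$. The main obstacle lies in the key step above: for non-Cohen-Macaulay $R$ the inclusion $r\in(x_1^m,\dots,x_d^m)R^\GR$ genuinely requires elements of $R^\GR$ outside $R$, and producing the required decomposition from homogeneous integral dependence relations in the generality of the proposition---beyond hypersurface identities of the type seen in Example~\ref{example:grsep}---is the central technical difficulty.
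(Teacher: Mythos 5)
Your reduction of the first assertion to the containment $r \in (x_1^m,\dots,x_d^m)R^\GR$ for homogeneous $r$ of degree at least $m\sigma$ is correct, and your deduction of the second assertion from the first (using that $[H^d_\frakm(R)]_{\ge 0}$ has finite length and that $R^\GR$ is a directed union of module-finite graded sub-$R$-algebras) is also sound. However, the containment you reduce to is exactly equivalent to the first assertion, and you do not prove it: you flag ``producing the required decomposition'' as the ``central technical difficulty'' and leave it unresolved. The idea of imitating the identity $z^2 = x\sqrt{xz} + y\sqrt{yz}$ from Example~\ref{example:grsep} by adjoining fractional roots of products of $r$ and the $x_i$ does not generalize in any evident way: that identity is a special feature of a Fermat cubic, and that example is in any case about $H^2_\frakm$ of a three-dimensional Rees ring, not about top local cohomology.

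The paper closes this gap by a genuinely different mechanism: Frobenius degree-scaling combined with the Huneke--Lyubeznik equational construction. If $[\eta]\in[H^d_\frakm(R)]_n$ with $n\ge 0$, then $F^e([\eta])\in[H^d_\frakm(R)]_{np^e}$ lies in the finite-length submodule $[H^d_\frakm(R)]_{\ge 0}$ for every $e$, so the iterates satisfy a monic $p$-polynomial relation
\[
F^e([\eta]) + r_1 F^{e-1}([\eta]) + \cdots + r_e [\eta] = 0
\]
with homogeneous $r_i\in R$. Lifting $[\eta]$ to a homogeneous cycle $\eta\in C^d(\bsx;R)$, one writes $F^e(\eta)+r_1F^{e-1}(\eta)+\cdots+r_e\eta=\del(\alpha)$, solves $F^e(\beta)+r_1F^{e-1}(\beta)+\cdots+r_e\beta=\alpha$ coordinatewise inside a module-finite normal graded extension $S$, and observes that $\eta-\del(\beta)$ then satisfies the monic integral equation $T^{p^e}+r_1T^{p^{e-1}}+\cdots+r_eT=0$ over $S$, hence lies in $S$ and maps to zero in $H^d_\frakm(S)$. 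This avoids any explicit construction of elements of $R^\GR$; it is the step your proposal is missing.
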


\begin{proof}
Let $F^e\colon H^d_\frakm(R)\to H^d_\frakm(R)$ denote the $e$-th iteration of the Frobenius map. Suppose $[\eta]\in{[H^d_\frakm(R)]}_n$ for some $n\ge0$. Then $F^e([\eta])$ belongs to ${[H^d_\frakm(R)]}_{np^e}$ for each $e$. As ${[H^d_\frakm(R)]}_{\ge 0}$ has finite length, there exists $e\ge1$ and homogeneous elements $r_1,\dots,r_e\in R$ such that
\begin{equation}
\label{eqn:iterates}
F^e([\eta])+r_1F^{e-1}([\eta])+\cdots+r_e[\eta]=0\,.
\end{equation}
We imitate the equational construction from \cite{HL}: Consider a homogeneous system of parameters $\bsx=x_1,\dots,x_d$, and compute $H^i_\frakm(R)$ as the cohomology of the \v Cech complex $C^\bullet(\bsx;R)$ below:
\[
0\to R\to\bigoplus_{i=1}^dR_{x_i}\to\bigoplus_{i<j}R_{x_ix_j}\to\cdots\to R_{x_1\cdots x_d}\to 0\,.
\]
This complex is $\ZZ$-graded; let $\eta$ be a homogeneous element of $C^d(\bsx;R)$ that maps to $[\eta]$ in $H^d_\frakm(R)$. Equation~\eqref{eqn:iterates} implies that
\[
F^e(\eta)+r_1F^{e-1}(\eta)+\cdots+r_e\eta
\]
is a boundary in $C^d(\bsx;R)$, say it equals $\del(\alpha)$ for a homogeneous element $\alpha$ of~$C^{d-1}(\bsx;R)$. Solving integral equations in each coordinate of $C^{d-1}(\bsx;R)$, there exists a module-finite extension domain~$S$ and $\beta$ in $C^{d-1}(\bsx;S)$ with
\[
F^e(\beta)+r_1F^{e-1}(\beta)+\cdots+r_e\beta=\alpha\,.
\]
Moreover, we may assume $S$ is a normal ring. Since $\eta-\del(\beta)$ is an element on $\ff(S)$ satisfying
\[
T^{p^e}+r_1T^{p^{e-1}}+\cdots+r_eT=0\,,
\]
it belongs to $S$. But then $\eta-\del(\beta)$ maps to zero in $H^d_\frakm(S)$. Thus, each homogeneous element of ${[H^d_\frakm(R)]}_{\ge 0}$ maps to $0$ in $H^d_\frakm(R^\GR)$.

For the final statement, note that ${[H^d_\frakm(R)]}_{\ge 0}$ has finite length.
\end{proof}

The next example illustrates why Proposition~\ref{prop:top} is limited to ${[H^d_\frakm(R)]}_{\ge 0}$.

\begin{example}
Let $K$ be a field of prime characteristic, and take $R$ to be the semigroup ring
\[
R=K[x_1\cdots x_d,\ x_1^d,\ \dots,\ x_d^d]\,.
\]
It is easily seen that $R$ is normal, and that ${[H^d_\frakm(R)]}_n$ is nonzero for each integer $n<0$. We claim that the induced map
\[
H^d_\frakm(R)\to H^d_\frakm(S)
\]
is injective for each module-finite extension ring $S$. For this, it suffices to check that $R$ is a \emph{splinter} ring, i.e., that $R$ is a direct summand of each module-finite extension ring; the splitting of $R\subseteq S$ then induces an $R$-splitting of $H^d_\frakm(R)\to H^d_\frakm(S)$.

To check that $R$ is a splinter ring, note that normal affine semigroup rings are weakly $F$-regular by \cite[Proposition~4.12]{HHJAMS}, and that weakly $F$-regular rings are splinter by \cite[Theorem~5.25]{HHJALG}. For more on splinters, we point the reader towards \cite{Ma,HHJALG,Singh:splinter1}.
\end{example}

\subsection*{Acknowledgments}
We thank Kazuhiko Kurano for pointing out an error in an earlier version of this manuscript.

%%%%%%%%%%%%%%%%%%%%%%%%%%%%%%%%%%%%%%%%%%%%%%%%%%%%%%%%%%%%%%%%%%%%%%%%

\end{document}